\newtheorem{theorem}{Theorem}
\newtheorem{lemma}{Lemma}
\newtheorem{proposition}{Proposition}
\newtheorem{remark}{Remark}
\newenvironment{proof}{{\bf Proof}.\ }{ \hfill $\square$}
\newcommand{\R}{\mathbb{R}}
\begin{document}

\title{Sharp bounds on enstrophy growth \\
in the viscous Burgers equation}

\author{Dmitry Pelinovsky \\
{\small Department of Mathematics, McMaster
University, Hamilton, Ontario, Canada, L8S 4K1}}

\date{\today}
\maketitle

\begin{abstract}
We use the Cole--Hopf transformation and the Laplace method for the heat equation to justify
the numerical results on enstrophy growth in the viscous Burgers equation on the unit circle.
We show that the maximum enstrophy achieved in the time evolution is scaled as $\mathcal{E}^{3/2}$,
where $\mathcal{E}$ is the large initial enstrophy, whereas the time needed for reaching the
maximal enstrophy is scaled as $\mathcal{E}^{-1/2}$. These bounds are
sharp for sufficiently smooth initial conditions.
\end{abstract}

\vspace{1cm}

Existence and regularity of solutions of the three-dimensional Navier--Stokes equations
is a challenging problem that attracted recently many researchers \cite{Doering,Lorenz}.
One possibility to continue local solutions globally in time is to control enstrophy
of the Navier--Stokes equations during the time evolution. Enstrophy is an integral quantity in
space, which may diverge in a finite evolution time, indicating singularities
in the Navier--Stokes equations. To control the growth of enstrophy, Lu and Doering \cite{LuLu}
studied bounds on the instantaneous growth rate of enstrophy and showed numerically that
these bounds are sharp in the limit of large enstrophy. However, these bounds
on the instantaneous growth rate do not imply that enstrophy blows up in a finite time, because solutions
of the Cauchy problem associated with the Navier-Stokes equation may deviate away from
the maximizers of the bounds even if the initial data are close to the maximizers.

To deal with this problem, Ayala and Protas \cite{Diego} looked at a toy model,
the one-dimensional viscous Burgers equation. Their numerical results
indicated that the bounds on the instantaneous growth rate are not sharp when
they are integrated on a finite time interval in conjugation with the energy dissipation
and the Poincar\'e inequality. Limited accuracy of numerical results did not allow them
to conclude if better estimates on the enstrophy growth over a finite time interval can
be justified within this context.

To address this question, we consider the Cauchy problem
for the one-dimensional viscous Burgers equation \cite{Burgers},
\begin{equation}
\label{Burgers}
\left\{ \begin{array}{l} u_t + 2u u_x = u_{xx}, \;\;\; x \in \mathbb{T}, \; t \in \mathbb{R}_+,\\
u|_{t = 0} = u_0, \phantom{texttext} x \in \mathbb{T},\end{array} \right.
\end{equation}
where $\mathbb{T} = \left[-\frac{1}{2},\frac{1}{2}\right]$ is the unit circle
equipped with the periodic boundary conditions for the real-valued function $u$.
Local well-posedness of the initial-value problem (\ref{Burgers}) holds for
$u_0 \in H^s_{\rm per}(\mathbb{T})$ with $s > -\frac{1}{2}$ \cite{Dix}.
Global existence holds in $H^s_{\rm per}(\mathbb{T})$ for any integer $s \geq 0$,
and we consider here global solutions of the viscous Burgers
equation (\ref{Burgers}) in $H^1_{\rm per}(\mathbb{T})$. For simplicity,
we will assume that $u_0$ is odd in $x$, which implies that $u(-x,t) = -u(x,t)$
holds for all $t \geq 0$ and $x \in \mathbb{T}$.

All $L^p_{\rm per}$ norms with even $p$ are monotonically decaying
in the time evolution of the viscous Burgers equation (\ref{Burgers}).
In particular, the energy dissipation equation follows from (\ref{Burgers})
after integration by parts:
\begin{equation}
\label{energy}
K(u) = \frac{1}{2} \int_{\mathbb{T}} u^2 dx \quad \Rightarrow \quad
\frac{d K(u)}{d t} = \int_{\mathbb{T}} u (u_{xx} - 2 u u_x) dx = - 2 E(u),
\end{equation}
where $E(u) = \frac{1}{2} \int_{\mathbb{T}} u_x^2 dx$ is a positive definite enstrophy.
For a smooth solution $u \in C(\mathbb{R}_+,H^3_{\rm per}(\mathbb{T}))$, the enstrophy
changes according to the equation:
\begin{eqnarray}
\label{rate-of-change-E}
\frac{d E(u)}{d t} = \int_{\mathbb{T}} u_x (u_{xxx} - 2 u u_{xx} - 2 u_x^2 ) dx
= -\int_{\mathbb{T}} ( u_{xx}^2 + u_x^3 ) dx \equiv R(u),
\end{eqnarray}
where $R(u)$ is the rate of change of $E(u)$. We can see from (\ref{rate-of-change-E})
that $R(u)$ is a sum of negative definite
quadratic part and a sign-indefinite cubic part. The quadratic part corresponds to the
diffusion term of the viscous Burgers equation and the cubic part corresponds to
the nonlinear advection term. It is the latter term that may lead to the enstrophy growth during the initial
time evolution.

Lu and Doering \cite{LuLu} showed that the rate of change $R(u)$ in (\ref{rate-of-change-E}) can be estimated by
\begin{equation}
\label{bound-on-R}
R(u) \leq \frac{3}{2} E^{5/3}(u),
\end{equation}
and illustrated numerically that the growth $R(u) = \mathcal{O}(\mathcal{E}^{5/3})$
is achieved in the limit of large $\mathcal{E} := E(u)$.
If bound (\ref{bound-on-R}) is sharp on the time interval $[0,T]$
for some $T > 0$, then integration of the enstrophy equation (\ref{rate-of-change-E})
with the use of the energy dissipation equation (\ref{energy}) implies
\begin{equation}
\label{nonlocal-bound}
E^{1/3}(u(T)) - E^{1/3}(u_0) \leq \frac{1}{2} \int_0^T E(u(t)) dt
= \frac{1}{4} \left[ K(u_0) - K(u(T)) \right].
\end{equation}

Using the Poincar\'e inequality for periodic functions with zero mean,
\begin{equation}
\label{Poincare-inequality}
K(u_0) \leq \frac{1}{4 \pi^2} E(u_0),
\end{equation}
and neglecting $K(u(T))$ in (\ref{nonlocal-bound}), we can obtain
\begin{equation}
\label{bound-on-E}
E(u(T)) \leq \left(  \mathcal{E}^{1/3} + \frac{1}{16 \pi^2}  \mathcal{E} \right)^3, \quad \mathcal{E} := E(u_0).
\end{equation}
Ayala and Protas \cite{Diego} showed numerically that the integral bound (\ref{bound-on-E}) is not sharp
even in the limit of large ${\cal E}$. Instead, they obtained the following numerical result:
\begin{equation}
\label{future-rates}
T_* = {\cal O}({\cal E}^{-1/2}), \quad E(u(T_*))
= {\cal O}({\cal E}^{3/2}), \quad K(u(T_*)) = {\cal O}({\cal E}),
\end{equation}
where $T_*$ is the value of time $t$, at which $E(u(t))$ is maximal. They also wanted to show that
$K(u_0) - K(u(T_*)) = {\cal O}({\cal E}^{1/2})$, so that the full integral bound (\ref{nonlocal-bound})
could be useful but numerical approximations of this quantity suffered from large errrors:
\begin{equation}
\label{suspicious-bound}
K(u_0) - K(u(T_*)) = \mathcal{O}({\cal E}^{0.68 \pm 0.25}).
\end{equation}

In the previous work \cite{Pelinovsky}, we used dynamical system methods to study analytically the enstrophy
growth in the viscous Burgers equation. Our technique based on the self-similar transformation
and analysis of asymptotic stability of viscous shocks in an unbounded domain did not rely on
the remarkable properties of the viscous Burgers equation such as
the Cole--Hopf transformation \cite{Cole,Hopf} of equation (\ref{Burgers})  to the linear heat equation.
On the other hand, a weaker version of bounds (\ref{future-rates}) modified by logarithmic factors
was justified as a result of this approach. No estimate on $K(u_0) - K(u(T_*))$ has been obtained.

In this work, we shall rely on the Cole--Hopf transformation and use the Laplace method for the heat equation.
The Laplace method is typically used to recover solutions of the inviscid Burgers equations from
solutions of the viscous Burgers equation in the limit of vanishing viscosity
(see, e.g., \cite[Chapter 2]{Whitham}, \cite[Example 6.5.2]{AblowitzFokas}, or \cite[Section 3.6]{Miller}). 
Applications of this method to statistical properties of the Burgers turbulence can be 
found in \cite{Frach}.

Note that the limit of vanishing viscosity corresponds to the limit of large
enstrophy in the context of our work. We shall implement the Laplace method
to justify numerical results (\ref{future-rates}) and to estimate
$K(u_0) - K(u(T_*))$ as $\mathcal{E} \to \infty$.
Our main result is the following theorem.

\begin{theorem}
Consider the initial-value problem (\ref{Burgers}) with
initial data $u_0(x) = k f(x)$, where $f \in C^3_{\rm per}(\mathbb{T})$ is an arbitrary odd function
such that $f''(x) \geq 0$ for all $x \in \left[0,\frac{1}{2}\right]$.
Consider the limit $k \to \infty$ and denote the initial enstrophy by
$\mathcal{E} = E(u_0) = \mathcal{O}(k^2)$. There exists $T_* > 0$ such that
the enstrophy $E(u)$ achieves its maximum at $u_* = u(\cdot,T_*)$ with
\begin{equation}
T_* = \mathcal{O}(\mathcal{E}^{-1/2}), \quad
E(u_*) = \mathcal{O}(\mathcal{E}^{3/2}), \quad
K(u_*) = \mathcal{O}(\mathcal{E}),
\label{arg-max-theorem}
\end{equation}
and
\begin{equation}
\label{sharp-bound-K}
K(u_0) - K(u_*) = \mathcal{O}(\mathcal{E}),
\end{equation}
where all bounds are sharp as $\mathcal{E} \to \infty$.
\label{theorem-time}
\end{theorem}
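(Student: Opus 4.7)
The plan is to apply the Cole--Hopf substitution $u = -\phi_x/\phi$ to reduce (\ref{Burgers}) to the linear heat equation $\phi_t = \phi_{xx}$ with initial data $\phi_0(x) = \exp(-k F(x))$, where $F(x) = \int_0^x f(y)\, dy$ is even, $F(0) = 0$, with a non-degenerate local maximum at the origin (since $F''(0) = f'(0) < 0$) and, generically, $F^{(4)}(0) = f'''(0) > 0$. Rescaling time by $\tau = k t$, the Cole--Hopf formula reads
$$\phi\!\left(x, \tfrac{\tau}{k}\right) = \sqrt{\tfrac{k}{4 \pi \tau}} \int e^{-k \Psi(y; x, \tau)}\, dy, \qquad \Psi(y; x, \tau) = F(y) + \tfrac{(x - y)^2}{4 \tau},$$
so that the large-$k$ limit is the vanishing-viscosity limit, and the dynamics of interest lives on $\tau = O(1)$, i.e.\ $t = O(k^{-1}) = O(\mathcal{E}^{-1/2})$. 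Critical points of $\Psi$ in $y$ satisfy $x = y + 2\tau f(y)$, the characteristic equation of the inviscid Burgers equation $u_t + 2 u u_x = 0$ with data $kf$, and the map $y \mapsto y + 2\tau f(y)$ first fails to be monotonic at $(x, \tau) = (0, \tau_s)$, where $\tau_s = 1/(2|f'(0)|)$.

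Next I would apply Laplace's method. For $\tau < \tau_s$ bounded away from $\tau_s$ on the $k^{-1/2}$ scale, a unique non-degenerate saddle gives $u(x, t) = k f(y(x, \tau)) + O(1)$ uniformly, whence $E(u(t)) = O(k^2)$. For $\tau$ at a finite distance past $\tau_s$, $\Psi(\cdot; 0, \tau)$ has two symmetric non-degenerate minima at $y = \pm y_*(\tau)$; near $\tau_s$ these are given by
$$y_*(\tau)^2 = \tfrac{6}{f'''(0)}\left(|f'(0)| - \tfrac{1}{2\tau}\right) + O((\tau - \tau_s)^2).$$
Expanding $\Psi(y_\pm(x); x, \tau) \approx \Psi_*(\tau) \mp x\, y_*(\tau)/(2 \tau)$ and summing the two saddle contributions yields the inner viscous shock profile
$$u(x, t) \approx -\frac{k\, y_*(\tau)}{2 \tau}\, \tanh\!\left(\frac{k\, y_*(\tau)}{2 \tau}\, x\right), \qquad |x| \lesssim k^{-1}.$$
A direct computation using $\int_{\R} \sech^4 \eta\, d\eta = 4/3$ gives the shock contribution $\tfrac{1}{2}\int u_x^2\, dx \approx \tfrac{2}{3}(k\, y_*(\tau)/(2\tau))^3$, which dominates the outer bulk contribution of order $k^2$. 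Since $(y_*(\tau)/\tau)^3$ vanishes at $\tau = \tau_s$ and as $\tau \to \infty$, it attains a unique maximiser $\tau_* = O(1)$, yielding $T_* = \tau_*/k = O(\mathcal{E}^{-1/2})$ and $E(u_*) = O(k^3) = O(\mathcal{E}^{3/2})$, both sharp as $k \to \infty$.

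For $K(u_*)$, outside the $O(k^{-1})$ shock region $u(\cdot, T_*)$ matches the piecewise-smooth weak inviscid solution of amplitude $O(k)$, so $K(u_*) = \tfrac{1}{2}\int u^2\, dx = O(k^2) = O(\mathcal{E})$, sharp because the outer profile is a non-trivial $k$-independent shape scaled by $k$. The energy identity (\ref{energy}) then gives $K(u_0) - K(u_*) = \int_0^{T_*} 2 E(u(t))\, dt$; matching upper and lower bounds of order $O(T_* \cdot k^3) = O(\mathcal{E})$ follow from the fact that $y_*(\tau) = \Theta(1)$ on an $O(1)$ interval of $\tau$ past $\tau_s$, which makes $E(u(t)) = \Theta(k^3)$ on a time-window of length $\Theta(k^{-1})$. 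The main obstacle will be to make the Laplace asymptotics uniform across the three $\tau$-regimes (pre-shock, the narrow transition layer of width $\sim k^{-1/2}$ around $\tau_s$ where the saddle becomes quartic, and post-shock), to glue the two-saddle and single-saddle formulae through the quartic-Laplace transition, and to control uniformly the exponentially small but nonnegligible contributions of the periodic images of the minimum of $F$ at $y = \pm \tfrac{1}{2}$.
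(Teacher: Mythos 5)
Your proposal follows the same backbone as the paper's proof: the Cole--Hopf reduction to the heat equation, Laplace asymptotics for $\int e^{-k\left(F(y)+\frac{(x-y)^2}{4t}\right)}dy$ with $t=\mathcal{O}(k^{-1})$, a single-saddle regime before the caustic time $t_0=\frac{1}{2k|f'(0)|}$ where $E(u(t))=\mathcal{O}(k^2)$, and a two-saddle regime after it in which the enstrophy is carried by an $\mathcal{O}(k^{-1})$-wide shock layer of amplitude $k|f(s^+_{0,a})|$; your $\tanh$ profile is the closed form of the paper's two-saddle expansion (\ref{expansion-2}), and your $\tfrac{2}{3}A^3$ computation via $\int\sech^4$ is the integrated version of (\ref{expansion-2-1}) together with Proposition \ref{proposition-Laplace-2}, so the first three bounds are obtained the same way. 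Two points of genuine divergence are worth recording. First, for $K(u_0)-K(u_*)$ you integrate the dissipation identity $\frac{d}{dt}K=-2E$ over $[0,T_*]$ and use that $E=\Theta(k^3)$ on a $\Theta(k^{-1})$ time window; the paper instead computes $K(u_*)$ directly through the change of variables $x\mapsto s^+_{x,a}$ and proves the strict convexity inequality (\ref{required-bound}) via the auxiliary functions $G$ and $H$, which yields the explicit leading constant in (\ref{bound-2}). Your route is shorter and suffices for the order statement (\ref{sharp-bound-K}), but it does not produce the explicit coefficient. Second, what you identify as the main obstacle --- uniform asymptotics through the quartic transition layer of width $k^{-1/2}$ around $t_0$ --- is largely circumvented in the paper: since $f'$ is increasing on $[0,x_*]$, one has $a_*=|f(x_*)|/x_*<|f'(0)|$ strictly, so the maximizing time sits at a fixed distance (in the rescaled variable) beyond the bifurcation and only the standard two-saddle Laplace expansion is needed there; the paper's remark on the Chester--Friedman--Ursell regime makes exactly this point. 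The one step that neither your sketch nor the paper spells out in full is an upper bound on $E(u(t))$ that is uniform across the transition window, which is what certifies that the global-in-time maximum is attained in the post-shock regime rather than near $t_0$; you are right to flag this as the place where real work remains.
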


Because $f \in C^3_{\rm per}(\mathbb{T})$ is an odd function with $f''(x) \geq 0$ for all $x \in \left[0,\frac{1}{2}\right]$,
it implies necessarily that $f(0) = f\left(\frac{1}{2}\right) = 0$,
$f(x) < 0$ for all $x \in \left(0,\frac{1}{2}\right)$, and
$f'(y)$ is a monotonically increasing function
from $f'(0) < 0$ to $f'\left(\frac{1}{2}\right) > 0$ with a unique zero at $x_*$ such that
$f'(x_*) = 0$. We will actually show that
\begin{equation}
\label{bound-1}
T_* = \frac{x_*}{2 k |f(x_*)|}, \quad
E(u_*) = \frac{1}{2} k^3 |f(x_*)|^3 + \mathcal{O}(k^2), \quad \mbox{\rm as} \quad k \to \infty,
\end{equation}
and
\begin{equation}
\label{bound-2}
K(u_0) - K(u_*) = k^2 \left( \int_{0}^{x_*} f^2(y) dy - \frac{1}{3} x_* f^2(x_*) \right) + \mathcal{O}(k),
\quad \mbox{\rm as} \quad k \to \infty,
\end{equation}
where the leading-order terms are all nonzero. Sharp bounds (\ref{bound-1})--(\ref{bound-2}) rule out
the hope of using the integral bound (\ref{nonlocal-bound}) that follows from the instantaneous
estimate (\ref{bound-on-R}) and the balance equations (\ref{energy}) and (\ref{rate-of-change-E}).

As an example, we can consider $f(x) = -2\pi \sin(2\pi x)$, which saturates
the Poincar\'{e} inequality (\ref{Poincare-inequality}) and satisfies the conditions
of Theorem \ref{theorem-time}. Then, $x_* = \frac{1}{4}$, where $f'(x_*) = 0$, and the sharp
bounds (\ref{bound-1})--(\ref{bound-2}) yield the explicit expressions
\begin{equation*}
T_* = \frac{1}{16 \pi k}, \quad
E(u_*) = 4 \pi^3 k^3 + \mathcal{O}(k^2), \quad
K(u_0) - K(u_*) = \frac{\pi^2}{6} k^2 + \mathcal{O}(k),
\quad \mbox{\rm as} \quad k \to \infty.
\end{equation*}

A generalization of Theorem \ref{theorem-time} can be developed
for any $f \in C^3_{\rm per}(\mathbb{T})$ with
finitely many changes in the sign of $f''$ on $\left[0,\frac{1}{2}\right]$
by the price of lengthier technical computations.
On the other hand, it is not clear if the bounds (\ref{arg-max-theorem}) and (\ref{sharp-bound-K})
remain sharp for initial conditions with limited regularity.

In the remainder of this paper, we shall prove Theorem \ref{theorem-time}.
Using the Cole--Hopf transformation \cite{Cole,Hopf},
\begin{equation}
\label{Cole-Hopf}
u(x,t) = - \frac{\partial}{\partial x} \log \psi(x,t), \quad
\psi(x,t) > 0 \quad \mbox{\rm for all } \; (x,t),
\end{equation}
we rewrite the Cauchy problem (\ref{Burgers}) in the equivalent form
\begin{equation}
\label{heat}
\left\{ \begin{array}{l} \psi_t = \psi_{xx}, \quad \quad \quad  x \in \mathbb{T}, \;\;\; t \in \mathbb{R}_+,\\
\psi|_{t = 0} = \psi_0, \phantom{textt} x \in \mathbb{T},\end{array} \right.
\end{equation}
where
$$
\psi_0(x) = e^{-\int_0^x u_0(s) ds} = e^{- k F(x)}, \quad F(x) := \int_0^x f(s) ds.
$$
If $f \in C^3_{\rm per}(\mathbb{T})$ is odd in $x$, then
$\psi_0 \in C^4_{\rm per}(\mathbb{T})$ is even in $x$.

Although the Cauchy problem (\ref{heat}) is posed on the periodic domain, we can still
construct the solution as a convolution of the initial data $\psi_0$ with
the heat kernel $G_t : \R \to \R_+$ defined on the entire axis,
\begin{equation}
G_t(x) = \frac{1}{\sqrt{4 \pi t}} e^{-\frac{x^2}{4t}}, \quad x \in \R, \;\; t \in \R_+.
\end{equation}
This is justified by the generalized Young inequality,
$$
\| G_t \star \psi_0 \|_{L^r(\mathbb{R})} \leq \| G_t \|_{L^p(\mathbb{R})} \| \psi_0 \|_{L^q(\mathbb{R})},
\quad 1 \leq p,q,r \leq \infty : \quad \frac{1}{p} + \frac{1}{q} = 1 + \frac{1}{r},
$$
where $\star$ denotes the convolution integral. If $\psi_0 \in L^{\infty}_{\rm per}(\mathbb{R})$,
then $G_t \star \psi_0 \in L^{\infty}_{\rm per}(\mathbb{R})$ for all $t \in \R_+$.
Therefore, we write the solution to the Cauchy problem (\ref{heat}) in the explicit form,
\begin{equation}
\label{solution-heat}
\psi(x,t) = \frac{1}{\sqrt{4 \pi t}} \int_{-\infty}^{\infty} e^{- k F(y) -\frac{(x-y)^2}{4t}} dy.
\end{equation}

Let us define the parametrization of the time variable by $t = \frac{1}{2 k a}$, where
$a \in \R_+$ is a new parameter. The solution of the viscous Burgers equation can now be written
in the explicit form,
\begin{equation}
\label{solution-Burgers}
u(x,t) = - \frac{\partial}{\partial x} \log I_{x,a}(k), \quad
I_{x,a}(k) := \int_{-\infty}^{\infty} e^{- k \phi_{x,a}(y)} dy,
\end{equation}
where $\phi_{x,a}(y) := F(y) + \frac{1}{2} a (x-y)^2$. Integral $I_{x,a}(k)$ can be studied
in the limit $k \to \infty$ by means of the Laplace method (Section 3.4 in \cite{Miller}).
We recall the main result of the Laplace method.

\begin{proposition}
For any $-\infty \leq a < b \leq \infty$,
assume that $\phi \in C^4(a,b)$ has a global minimum at $c \in (a,b)$
such that $\phi'(c) = 0$ and $\phi''(c) > 0$. Then, for any $\theta \in C^2(a,b) \cap L^1(a,b)$,
we have the following asymptotic expansion:
\begin{equation}
\label{Laplace1}
I(k) := \int_{a}^{b} \theta(y) e^{-k \phi(y)} dy = \left( \frac{2\pi}{k \phi''(c)} \right)^{1/2}
\theta(c) e^{-k \phi(c)} \left[ 1 + \mathcal{O}\left(\frac{1}{k}\right) \right] \quad \mbox{\rm as} \quad
k \to \infty.
\end{equation}
\label{proposition-Laplace-1}
\end{proposition}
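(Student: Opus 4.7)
My plan is to prove the asymptotic (\ref{Laplace1}) by the textbook Laplace recipe: split the integration interval into a small window around $c$ and its complement, show that the complement is exponentially negligible, Taylor-expand the phase and amplitude inside the window, rescale the integration variable so that the quadratic part of the phase becomes the standard Gaussian exponent, and compute the corrections by term-by-term integration against $e^{-s^2/2}$.

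First I would fix $\delta > 0$ small enough that $[c-\delta,c+\delta] \subset (a,b)$ and $\phi''(y) \geq \tfrac{1}{2}\phi''(c)$ on this window. Since $c$ is the \emph{global} minimum and $\phi \in C^4(a,b)$, the quantity $\mu := \inf\{\phi(y) - \phi(c) : y \in (a,b), \; |y-c| \geq \delta\}$ is strictly positive, and hence the outer contribution is bounded by $e^{-k\phi(c)} e^{-(k-1)\mu}\|\theta\|_{L^1(a,b)}$, which is exponentially smaller than the claimed leading term. On the inner window I would Taylor-expand using the $C^4$ regularity of $\phi$ as $\phi(y) = \phi(c) + \tfrac{1}{2}\phi''(c)(y-c)^2 + \tfrac{1}{6}\phi'''(c)(y-c)^3 + \tfrac{1}{24}\phi^{(4)}(c)(y-c)^4 + o((y-c)^4)$, and similarly $\theta(y) = \theta(c) + \theta'(c)(y-c) + \tfrac{1}{2}\theta''(c)(y-c)^2 + o((y-c)^2)$ from $\theta \in C^2$. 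Substituting $s = (y-c)\sqrt{k\phi''(c)}$ then turns the leading exponent into $-s^2/2$ and yields the prefactor $(k\phi''(c))^{-1/2}e^{-k\phi(c)}$.

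The main technical point is extracting the explicit $\mathcal{O}(1/k)$ error rather than a mere $o(1)$, and this is where I expect the bookkeeping to be heaviest. After rescaling, the cubic phase correction becomes $-\tfrac{\phi'''(c)}{6\phi''(c)^{3/2}}\,s^3/\sqrt{k}$ and the linear amplitude correction is $\theta'(c)s/\sqrt{k\phi''(c)}$; both formally contribute at order $k^{-1/2}$, but they vanish upon Gaussian integration by parity in $s$. Consequently the first nonvanishing correction is of order $1/k$ and is built from three pieces: the quartic phase contribution through $\phi^{(4)}(c)$, the square of the cubic phase contribution through $(\phi'''(c))^2$, and the quadratic amplitude contribution through $\theta''(c)$. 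To make this rigorous I would use Taylor expansions with integral-form remainders, establish a uniform Gaussian majorant of the form $e^{-s^2/4}$ valid on $|s| \leq \delta\sqrt{k\phi''(c)}$ (which follows from the lower bound $\phi(y) - \phi(c) \geq \tfrac{1}{4}\phi''(c)(y-c)^2$ on the inner window), and apply dominated convergence both to produce the limit and to absorb the error. The hypotheses $\phi \in C^4$ and $\theta \in C^2 \cap L^1$ are exactly what is needed for this argument; weaker smoothness would only give $o(1)$, not the stated $\mathcal{O}(1/k)$ rate.
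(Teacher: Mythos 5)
The paper does not actually prove Proposition \ref{proposition-Laplace-1}: it is quoted as a known result with a pointer to Section 3.4 of \cite{Miller}, so there is no in-paper argument to compare against. What you have written is the standard textbook derivation --- inner/outer splitting around the critical point, Taylor expansion of phase and amplitude, Gaussian rescaling, parity cancellation of the formally $k^{-1/2}$ terms --- and in outline it is the right argument. Your identification of the three surviving $\mathcal{O}(1/k)$ contributions (from $\phi^{(4)}(c)$, $(\phi'''(c))^2$, and $\theta''(c)$) is correct, and controlling the exponentiated Taylor remainders by a uniform majorant $e^{-s^2/4}$ on the inner window is the standard way to make the $\mathcal{O}(1/k)$ rate rigorous rather than a mere $o(1)$.

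There is, however, one genuine gap: the assertion that $\mu := \inf\{\phi(y)-\phi(c) : |y-c|\ge\delta\}$ is strictly positive \emph{because} $c$ is the global minimum and $\phi\in C^4$. The proposition explicitly allows $a=-\infty$ and $b=+\infty$, and on an unbounded interval a global minimum does not force this infimum to be positive: $\phi(y)=(y-c)^2e^{-(y-c)^2}$ is smooth, has its global minimum at $c$ with $\phi''(c)=2>0$, yet $\mu=0$ for every $\delta$. In that situation your tail estimate degenerates to $\mathcal{O}\bigl(e^{-k\phi(c)}\|\theta\|_{L^1}\bigr)$, which is \emph{larger} than the claimed leading term $\mathcal{O}\bigl(k^{-1/2}e^{-k\phi(c)}\bigr)$, and for a suitable $\theta\in C^2\cap L^1$ supported far from $c$ the asymptotic formula genuinely fails. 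So positivity of $\mu$ must either be added as a hypothesis (as careful statements of Laplace's method do) or derived from extra structure. In the paper's application it holds automatically, since $\phi_{x,a}(y)=F(y)+\tfrac{1}{2}a(x-y)^2$ with $F$ periodic and bounded grows quadratically as $|y|\to\infty$; you should make that verification explicit rather than attribute it to the global-minimum hypothesis alone.
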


Here and in what follows, we use the following notations. Let $X$ be a Banach space.
We write $A = \mathcal{O}_X(k^p)$ as $k \to \infty$
if there exist constants $C_{\pm}$ such that $0 \leq C_- < C_+ < \infty$
and $C_- \mathcal{E}^p \leq \| A \|_X \leq C_+ \mathcal{E}^p$.
If $X = \R$, we write $A = \mathcal{O}(\mathcal{E}^p)$.

Applying Proposition \ref{proposition-Laplace-1}, we obtain the following.

\begin{lemma}
Under assumptions of Theorem \ref{theorem-time}, the following expansion
\begin{equation}
\label{expansion-1}
u(x,t) = k f(s_{x,a}) + \mathcal{O}_{L^{\infty}_{\rm per}(\mathbb{T})}(1) \quad
\mbox{\rm as } \quad k \to \infty,
\end{equation}
holds for all $t \in [0,t_0)$, where $t_0 := \frac{1}{2 k |f'(0)|}$
and $s_{x,a}$ is the unique root of $a (s - x) + f(s) = 0$.
\label{lemma-Laplace-1}
\end{lemma}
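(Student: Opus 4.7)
The plan is to apply Proposition \ref{proposition-Laplace-1} to the integral $I_{x,a}(k)$ uniformly for $x \in \mathbb{T}$ in the large-$k$ limit, and then to extract the asymptotic of $u(x,t) = -\partial_x \log I_{x,a}(k)$ by differentiating under the integral sign.

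First I would verify that the convexity hypothesis holds uniformly in $x$ under the assumption $t < t_0$. Since $f$ is odd with $f'' \geq 0$ on $\left[0,\frac{1}{2}\right]$, the even function $f'$ attains its minimum at $0$, so $\phi''_{x,a}(y) = f'(y) + a \geq a - |f'(0)|$ for every $y \in \R$. The condition $t < t_0$ is equivalent to $a > |f'(0)|$, which makes $\phi_{x,a}$ strictly and uniformly convex on $\R$. The critical-point equation $\phi'_{x,a}(s) = f(s) + a(s-x) = 0$ then admits a unique root $s_{x,a}$, which is the global minimum; by the implicit function theorem $s_{x,a}$ depends smoothly on $x \in \mathbb{T}$, and $\phi''_{x,a}(s_{x,a})$ is bounded above and below by strictly positive constants uniformly in $x$.

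Next I would differentiate under the integral sign to obtain $\partial_x I_{x,a}(k) = -k\int a(x-y)\, e^{-k\phi_{x,a}(y)}\, dy$, and use the critical-point identity $a(x - s_{x,a}) = f(s_{x,a})$ to decompose $a(x-y) = f(s_{x,a}) + a(s_{x,a} - y)$. Writing $u(x,t) = -\partial_x I_{x,a}(k)/I_{x,a}(k)$, this yields
\begin{equation*}
u(x,t) = k f(s_{x,a}) + k\, \frac{J_{x,a}(k)}{I_{x,a}(k)}, \qquad J_{x,a}(k) := \int_{-\infty}^{\infty} a(s_{x,a}-y)\, e^{-k\phi_{x,a}(y)}\, dy.
\end{equation*}
Because the amplitude $a(s_{x,a}-y)$ vanishes linearly at the critical point, Proposition \ref{proposition-Laplace-1} (applied after a routine localization that cuts off the exponentially small tail and makes the amplitude $L^1$) gives $J_{x,a}(k)/I_{x,a}(k) = \mathcal{O}(1/k)$, so the entire correction is $\mathcal{O}(1)$.

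The hard part will be upgrading this pointwise-in-$x$ statement to the claimed $L^\infty_{\rm per}(\mathbb{T})$ bound. All data controlling the Laplace remainder --- the critical point $s_{x,a}$, the value $\phi''_{x,a}(s_{x,a})$, and the higher derivatives of $\phi_{x,a}$ at $s_{x,a}$ that enter the next-order Taylor expansion --- are continuous functions of $x$ on the compact set $\mathbb{T}$, with $\phi''_{x,a}(s_{x,a})$ bounded away from zero by $a - |f'(0)|$. Consequently, reinspecting the standard proof of Proposition \ref{proposition-Laplace-1} via the Gaussian change of variables $y = s_{x,a} + z/\sqrt{k}$ followed by dominated convergence, the implicit constants in the $\mathcal{O}(1/k)$ remainder can be chosen independently of $x$, yielding the uniform bound in (\ref{expansion-1}).
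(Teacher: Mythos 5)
Your proposal is correct and follows essentially the same route as the paper: both verify the uniform convexity $\phi''_{x,a} = f'+a \geq a - |f'(0)| > 0$ for $t < t_0$, identify the unique critical point $s_{x,a}$, and apply Proposition \ref{proposition-Laplace-1} to $I_{x,a}(k)$ and $\partial_x I_{x,a}(k)$ together with the identity $a(x - s_{x,a}) = f(s_{x,a})$. Your decomposition $a(x-y) = f(s_{x,a}) + a(s_{x,a}-y)$, which isolates the main term exactly and reduces the remainder to a single Laplace integral with amplitude vanishing at the critical point, is a minor (and arguably cleaner) reorganization of the paper's argument, which instead expands $I$ and $\partial_x I$ separately and takes their ratio; your explicit attention to uniformity of the error in $x$ is also more careful than the paper's.
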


\begin{proof}
We compute
$$
\phi'_{x,a}(y) = f(y) + a (y - x), \quad
\phi''_{x,a}(y) = f'(y) + a.
$$
For every $a > \max_{y \in \mathbb{T}}(-f'(y)) = |f'(0)|$ and every
$x \in \mathbb{T}$, there exists exactly one root of $f(y) + a (y - x) = 0$.
Let us denote this root by $s_{x,a}$. Conditions of Proposition \ref{proposition-Laplace-1}
are satisfied, so that
$$
I_{x,a}(k) = \left( \frac{2\pi}{k (f'(s_{x,a}) + a)} \right)^{1/2}
e^{-k \phi_{x,a}(s_{x,a})} \left[ 1 + \mathcal{O}\left(\frac{1}{k}\right) \right] \quad \mbox{\rm as} \quad
k \to \infty
$$
and
\begin{eqnarray*}
\partial_x I_{x,a}(k) & = & a k \int_{-\infty}^{\infty} (y-x) e^{- k \phi_{x,a}(y)} dy\\
& = & a k (s_{x,a} - x) \left( \frac{2\pi}{k (f'(s_{x,a}) + a)} \right)^{1/2}
e^{-k \phi_{x,a}(s_{x,a})} \left[ 1 + \mathcal{O}\left(\frac{1}{k}\right) \right] \quad \mbox{\rm as} \quad
k \to \infty.
\end{eqnarray*}
Using these expansions, the representation (\ref{solution-Burgers}), and the
equation $a (x - s_{x,a}) = f(s_{x,a})$, we obtain (\ref{expansion-1}).
\end{proof}

\begin{remark}
The result of Lemma \ref{lemma-Laplace-1} is known in the limit of vanishing viscosity, when
a smooth solution of
the viscous Burgers equation is shown to converge to the classical solution of
the inviscid Burger equation before a shock is formed \cite[Chapter 2]{Whitham}.
\end{remark}

Using the representation
\begin{equation}
\label{derivative-formula}
u_x(x,t) = u^2(x,t) - \frac{\partial^2_{x} I_{x,a}(k)}{I_{x,a}(k)}
\end{equation}
and the Laplace method for $\partial^2_{x} I_{x,a}(k)$, the result of Lemma \ref{lemma-Laplace-1}
can be extended to show for all $t \in [0,t_0)$ that
\begin{equation}
\label{expansion-1-1}
u_x(x,t) = k \frac{a f'(s_{x,a})}{a + f'(s_{x,a})} +
\mathcal{O}_{L^{\infty}_{\rm per}(\mathbb{T})}(1) \quad
\mbox{\rm as } \quad k \to \infty.
\end{equation}
As a result, the energy and enstrophy are expanded as $k \to \infty$ as follows:
\begin{eqnarray*}
K(u(t)) = \frac{1}{2} k^2 \int_{\mathbb{T}} f^2(s_{x,a}) dx + \mathcal{O}(k)
\end{eqnarray*}
and
\begin{eqnarray*}
E(u(t)) = \frac{1}{2} k^2 \int_{\mathbb{T}} \frac{a^2 (f'(s_{x,a}))^2}{(a + f'(s_{x,a}))^2} dx
+ \mathcal{O}(k).
\end{eqnarray*}
Because $f(0) = f\left(\frac{1}{2}\right) = 0$,
we have $s_{x,a} = x$ for $x = 0$ and $x = \frac{1}{2}$. Moreover, the map
$\mathbb{T} \ni x \mapsto s_{x,a} \in \mathbb{T}$ is one-to-one
and onto. This argument gives the energy conservation
at the leading order for all $t \in [0,t_0)$:
\begin{eqnarray*}
K(u(t)) & = & \frac{1}{2} k^2 \int_{\mathbb{T}} f^2(y) \left(1 + \frac{1}{a} f'(y) \right) dy + \mathcal{O}(k) \\
& = & \frac{1}{2} k^2 \int_{\mathbb{T}} f^2(y) dy + \mathcal{O}(k) \\
& = & K(u_0) + \mathcal{O}(k).
\end{eqnarray*}
On the other hand, the enstrophy grows initially but remains within the $\mathcal{O}(k^2)$
order for all $t \in [0,t_0)$:
$$
E(u(t)) = \frac{1}{2} k^2 \int_{\mathbb{T}} \frac{a (f'(y))^2}{a + f'(y)}  dy
+ \mathcal{O}(k).
$$

At $t = t_0$, that is, for $a = \max_{y \in \mathbb{T}}(-f'(y)) = |f'(0)|$, a local (pitchfork) bifurcation
occurs among the roots of $f(s) + a s  = 0$ near $s = 0$. For $t > t_0$, $f'(0) + a < 0$, so that
for all $x$ near $0$, a local maximum $s_{x,a}$ of $\phi_{x,a}$ exists near $0$, whereas
two local minima $s^+_{x,a}$ and $s^-_{x,a}$ of $\phi_{x,a}$ exist for
$s^+_{x,a} \in \left(0,\frac{1}{2}\right)$ and
$s^-_{x,a} \in \left(-\frac{1}{2},0\right)$.

In what follows, we
consider the values of $x \in \left[0,\frac{1}{2}\right]$ and use
the odd symmetry of $u(x,t)$ for $x \in \left[-\frac{1}{2},0\right]$.
Note that
$\phi_{0,a}(s^+_{0,a}) = \phi_{0,a}(s^-_{0,a})$ due to the pitchfork symmetry.
When $x$ is increased in $\left(0,\frac{1}{2}\right)$, then
$\phi_{x,a}(s^+_{x,a}) < \phi_{x,a}(s^-_{x,a})$ and the difference is growing
with the values of $x$. Moreover, there is $x_0 \in \left(0,\frac{1}{2}\right)$
such that the roots $s_{x,a}$ and $s^-_{x,a}$ coalesce at $x = x_0$ and disappear
as a result of the saddle-node bifurcation. To prove these claims, we
denote
$$
\varphi_{x,a} = \phi_{x,a}(s^-_{x,a}) - \phi_{x,a}(s^+_{x,a})
= \frac{1}{2} a(s^-_{x,a}-s^+_{x,a})(s^+_{x,a} + s^-_{x,a} - 2x) - \int_{s^-_{x,a}}^{s^+_{x,a}} f(s) ds
$$
and
$$
\chi_{x,a} = \left( \frac{f'(s^+_{x,a}) + a}{f'(s^-_{x,a}) + a} \right)^{1/2}
$$
We have the following.

\begin{lemma}
For every $t > t_0$, where $t_0 := \frac{1}{2 k |f'(0)|}$,
there is $x_0 \in \left( 0, \frac{1}{2} \right)$ such that for all $x \in [0,x_0)$,
three roots of $a (s - x) + f(s) = 0$ exists in the following order:
$$
-\frac{1}{2} < s^-_{x,a} < s_{x,a} \leq 0 < s^+_{x,a} < \frac{1}{2}.
$$
Moreover, $\varphi_{x,a}$ and $\chi_{x,a}$ are $C^1$ monotonically increasing functions of $x$
with $\varphi_{0,a} = 0$, $\chi_{0,a} = 1$, and $\chi_{x,a} \to +\infty$ as $x \to x_0$.
The point $x_0$ marks the saddle-node bifurcation among
the roots of $a (s - x) + f(s) = 0$ such that $s^-_{x_0,a} = s_{x_0,a}$
and $a + f'(s_{x_0,a}) = 0$.
\label{lemma-three-roots}
\end{lemma}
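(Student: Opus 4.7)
The strategy is to analyze the critical points of $\phi_{x,a}(y) = F(y) + \frac{1}{2} a (x-y)^2$ as a function of $y$ for fixed $a \in (0, |f'(0)|)$ (corresponding to $t > t_0$) and varying $x \in [0, 1/2]$, and then apply the implicit function theorem together with an envelope computation. Since $f$ is odd, $f'$ is even and $f''$ is odd, so $f'' \ge 0$ on $[0, 1/2]$ gives $f'' \le 0$ on $[-1/2, 0]$, and $f'$ attains its minimum at $y = 0$. Consequently, for $a < |f'(0)|$ the equation $\phi''_{x,a}(y) = f'(y) + a = 0$ has exactly two roots $\pm s_*$ in $(-1/2, 1/2)$, and $\phi'_{x,a}(y) = f(y) + a(y-x)$ is monotone on each of $(-1/2, -s_*)$, $(-s_*, s_*)$, $(s_*, 1/2)$, so it has at most three zeros. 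When there are three they are ordered $s^-_{x,a} < s_{x,a} < s^+_{x,a}$, with $s^\pm_{x,a}$ local minima of $\phi_{x,a}$ (hence $f'(s^\pm_{x,a}) + a > 0$) and $s_{x,a}$ a local maximum (so $f'(s_{x,a}) + a < 0$). At $x = 0$, oddness of $f$ forces $s_{0,a} = 0$ and $s^-_{0,a} = -s^+_{0,a}$, and three roots exist because the auxiliary function $G(s) := f(s) - s f'(s)$ satisfies $G(0) = 0$ and $G'(s) = -s f''(s) \le 0$ on $[0, 1/2]$ with strict inequality somewhere in $(0, s_*)$ (otherwise $f' \equiv f'(0)$ there, contradicting $f'(s_*) = -a$), so $\phi'_{0,a}(s_*) = G(s_*) < 0$ and by symmetry $\phi'_{0,a}(-s_*) > 0$.

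For $x > 0$ the implicit function theorem provides smooth root branches with
$$\frac{d s^\pm_{x,a}}{dx} = \frac{a}{f'(s^\pm_{x,a}) + a} > 0, \qquad \frac{d s_{x,a}}{dx} = \frac{a}{f'(s_{x,a}) + a} < 0,$$
so the outer roots move to the right while the middle root moves to the left from $s_{0,a} = 0$, giving $s_{x,a} \le 0$ for $x \ge 0$. Since $s^-_{x,a} \in (-1/2, -s_*)$ increases monotonically toward $-s_*$ while $s_{x,a} \in (-s_*, s_*)$ decreases toward $-s_*$, they must coalesce at some $x_0 > 0$ where $s^-_{x_0,a} = s_{x_0,a} = -s_*$, which is exactly the condition $f'(s_{x_0,a}) + a = 0$; this is the advertised saddle-node bifurcation. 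An explicit calculation gives $x_0 = -(f(s_*) + a s_*)/a$, and one must verify that $x_0$ genuinely lies in $(0, 1/2)$ in the regime where the lemma is applied.

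For the monotonicity of $\varphi_{x,a}$ and $\chi_{x,a}$ I would use the envelope theorem: since $s^\pm_{x,a}$ are critical points of $\phi_{x,a}$,
$$\frac{d}{dx}\phi_{x,a}(s^\pm_{x,a}) = \partial_x \phi_{x,a}(y)\big|_{y=s^\pm_{x,a}} = a(x - s^\pm_{x,a}),$$
giving $\frac{d\varphi_{x,a}}{dx} = a(s^+_{x,a} - s^-_{x,a}) > 0$ on $(0, x_0)$, with $\varphi_{0,a} = 0$ by pitchfork symmetry. For $\chi_{x,a}^2 = (f'(s^+_{x,a}) + a)/(f'(s^-_{x,a}) + a)$, the chain rule yields
$$\frac{d}{dx}\bigl(f'(s^+_{x,a}) + a\bigr) = f''(s^+_{x,a}) \cdot \frac{d s^+_{x,a}}{dx} \ge 0,$$
since $s^+_{x,a} > 0$ ensures $f''(s^+_{x,a}) \ge 0$; similarly $\frac{d}{dx}(f'(s^-_{x,a}) + a) \le 0$ since $s^-_{x,a} < 0$. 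Hence $\chi_{x,a}$ is non-decreasing with $\chi_{0,a} = 1$, and blows up as $x \to x_0^-$ because the denominator $f'(s^-_{x,a}) + a$ tends to $0^+$ while the numerator stays bounded away from zero. The main technical obstacle is upgrading these monotonicity statements from weak to strict, which requires that $f''$ not vanish identically on subintervals surrounding $s^\pm_{x,a}$; this can be extracted from the $C^3$ regularity of $f$ together with the non-degeneracy of the bifurcation, and is the main point where the proof becomes case-sensitive to the precise shape of $f$.
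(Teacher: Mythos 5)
Your proposal is correct and follows essentially the same route as the paper: a pitchfork at $x=0$ forced by the oddness of $f$, persistence of the three roots via the implicit function theorem, and sign computations of $\partial_x\varphi_{x,a}$ and $\partial_x\chi_{x,a}$ using $f''\geq 0$ on $\left[0,\tfrac12\right]$ and the oddness of $f''$ (the paper differentiates $\chi_{x,a}$ logarithmically rather than numerator and denominator separately, but the content is identical). Your additional details --- the root count via the inflection points $\pm s_*$, the explicit location $x_0=-s_*-f(s_*)/a$ of the coalescence, and the two caveats you flag (verifying $x_0<\tfrac12$ and upgrading $\partial_x\chi_{x,a}\geq 0$ to strict monotonicity) --- go beyond what the paper writes down, which asserts these points without proof.
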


\begin{proof}
The presence of three roots $s^-_{0,a} = -s^+_{0,a}$ and $s_{0,a} = 0$
follows for $x = 0$ because $a + f'(0) < 0$ for $t > t_0$ and $f$ is an odd function.
By continuity, three roots persist and, by the implicit function theorem,
the three roots are $C^1$ functions of $x$ as long as
$a + f'(s^{\pm}_{x,a}) > 0$. We now compute
$$
\partial_x \varphi_{x,a} = a (s^+_{x,a} - s^-_{x,a}) > 0,
$$
and
$$
\partial_x \chi_{x,a} = \chi_{x,a} \left[ \frac{f''(s^+_{x,a})}{(a + f'(s^+_{x,a}))^2} -
\frac{f''(s^-_{x,a})}{(a + f'(s^-_{x,a}))^2} \right] \geq 0,
$$
where the last inequality is due to $f''(x) \geq 0$ for $x \in \left[0,\frac{1}{2}\right]$
and the fact that $f''$ is an odd function. In addition, we note that
$$
\varphi_{0,a} = \frac{1}{2} a(s^-_{0,a}-s^+_{0,a})(s^+_{0,a} + s^-_{0,a}) - \int_{s^-_{0,a}}^{s^+_{0,a}} f(s) ds = 0
$$
and
$$
\chi_{0,a} = \left( \frac{f'(s^+_{0,a}) + a}{f'(s^-_{0,a}) + a} \right)^{1/2} = 1,
$$
because $s^-_{0,a} = -s^+_{0,a}$ and $f$ is odd. The statement of the lemma is proved.
\end{proof}

\begin{remark}
When $a$ is reduced further, additional saddle-node bifurcations occur among the roots of
$f(s) + a s  = 0$ due to periodicity of $f$. For all $x$ near $0$,
these bifurcations give rise to new maxima and minima of $\phi_{x,a}$
outside of $\mathbb{T}$ and the values of $\phi_{x,a}$ at new local minima are
larger than the values of $\phi_{x,a}(s^{\pm}_{x,a})$. Therefore, we can simply neglect
the presence of these additional bifurcations in the applications of the
Laplace method for all $a \in (0,|f'(0)|)$.
\end{remark}

We shall apply Proposition \ref{proposition-Laplace-1} to obtain the following.

\begin{lemma}
Let $s^{\pm}_{x,a}$ and $x_0$ be described in
Lemma \ref{lemma-three-roots}. Under assumptions of Theorem \ref{theorem-time}, for every $t > t_0$,
where $t_0 := \frac{1}{2 k |f'(0)|}$,
there is $x_1 \in (0,x_0)$ such that for all $x \in [0,x_1]$,
\begin{equation}
\label{expansion-2}
u(x,t) = k \frac{f(s^+_{x,a}) + \chi_{x,a} f(s^-_{x,a}) e^{-k \varphi_{x,a}}}{1 + \chi_{x,a} e^{-k \varphi_{x,a}}}
+ \mathcal{O}_{L^{\infty}(0,x_1)}(1) \quad
\mbox{\rm as } \quad k \to \infty
\end{equation}
whereas for all $x \in \left[x_1,\frac{1}{2}\right]$,
\begin{equation}
\label{expansion-3}
u(x,t) = k f(s^+_{x,a}) + \mathcal{O}_{L^{\infty}\left(x_1,\frac{1}{2}\right)}(1) \quad
\mbox{\rm as } \quad k \to \infty.
\end{equation}
\label{lemma-Laplace-2}
\end{lemma}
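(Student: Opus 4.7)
The plan is to treat the subintervals $[0,x_1]$ and $[x_1,\frac{1}{2}]$ separately, using Proposition \ref{proposition-Laplace-1} with two dominant critical points on the first, and essentially a single dominant critical point on the second. I would pick $x_1 \in (0,x_0)$ so that $\varphi_{x_1,a} > 0$ is bounded below; this is possible by Lemma \ref{lemma-three-roots}, since $\varphi_{x,a}$ is strictly increasing in $x$ with $\varphi_{0,a} = 0$.

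For $x \in [0,x_1]$, both $s^{\pm}_{x,a}$ are non-degenerate minima of $\phi_{x,a}$ with $a + f'(s^{\pm}_{x,a})$ uniformly bounded away from zero. I would split $\mathbb{R}$ into three intervals isolating $s^+_{x,a}$, $s^-_{x,a}$, and the interior local maximum $s_{x,a}$ (whose neighborhood contributes only an exponentially smaller amount), apply Proposition \ref{proposition-Laplace-1} at each minimum, and factor out the $s^+_{x,a}$-contribution to obtain
\begin{equation*}
I_{x,a}(k) = \left( \frac{2\pi}{k (a+f'(s^+_{x,a}))} \right)^{1/2} e^{-k\phi_{x,a}(s^+_{x,a})} \left[ 1 + \chi_{x,a} e^{-k\varphi_{x,a}} \right] \left[ 1 + \mathcal{O}(k^{-1}) \right].
\end{equation*}
The same analysis applied to $\partial_x I_{x,a}(k) = a k \int_{-\infty}^{\infty} (y-x) e^{-k\phi_{x,a}(y)} dy$, combined with the critical-point identities $a(x - s^{\pm}_{x,a}) = f(s^{\pm}_{x,a})$, produces an analogous two-term numerator. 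Taking $u(x,t) = -\partial_x \log I_{x,a}(k)$ and observing that the $\mathcal{O}(k^{-1})$ relative error becomes an $\mathcal{O}(1)$ absolute error after multiplication by the $\mathcal{O}(k)$ leading term yields (\ref{expansion-2}).

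For $x \in [x_1,\frac{1}{2}]$, I would distinguish $x \in [x_0,\frac{1}{2}]$, where only the non-degenerate minimum $s^+_{x,a}$ survives and the single-minimum argument of Lemma \ref{lemma-Laplace-1} applies verbatim, from $x \in [x_1,x_0)$, where two minima still exist but the bound $e^{-k\varphi_{x,a}} \leq e^{-k\varphi_{x_1,a}}$ forces the $s^-_{x,a}$ correction into the $\mathcal{O}(1)$ error. In both subcases the single non-degenerate minimum $s^+_{x,a}$ gives (\ref{expansion-3}).

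The main obstacle is controlling the integral near the saddle-node point $x = x_0$, where $s^-_{x,a}$ coalesces with $s_{x,a}$ and the Laplace amplitude $\chi_{x,a} \sim (a + f'(s^-_{x,a}))^{-1/2}$ diverges, so that the plain Laplace expansion breaks down. My plan is to sidestep any Airy-type uniform expansion by a crude envelope estimate: since $\phi_{x,a}(y) \geq \phi_{x,a}(s^-_{x,a})$ on the local basin of $s^-_{x,a}$, the contribution of this region to $I_{x,a}(k)$ is at most of order $e^{-k \phi_{x,a}(s^-_{x,a})}$ times a polynomial in $k$, hence at most $e^{-k\varphi_{x_1,a}}$ times a polynomial relative to the $s^+_{x,a}$-contribution. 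This is amply enough to absorb the divergence of $\chi_{x,a}$ into the uniform $\mathcal{O}(1)$ remainder in (\ref{expansion-3}), and making this bound quantitative is the technical heart of the proof.
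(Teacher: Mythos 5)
Your proposal is correct and follows essentially the same route as the paper: split the integral so that each piece contains one non-degenerate minimum, apply Proposition \ref{proposition-Laplace-1} to each piece (and to $\partial_x I_{x,a}(k)$, using $a(x-s^{\pm}_{x,a})=f(s^{\pm}_{x,a})$), and observe that the $s^-_{x,a}$ contribution enters with the relative weight $\chi_{x,a}e^{-k\varphi_{x,a}}$, which is exponentially small once $x\geq x_1$. The only substantive difference is that you explicitly handle the breakdown of the pointwise Laplace expansion near the saddle-node point $x=x_0$, where $a+f'(s^-_{x,a})\to 0$ and $\chi_{x,a}$ diverges, via a crude envelope bound on the basin of $s^-_{x,a}$; the paper's proof passes over this non-uniformity in silence, so your treatment is, if anything, more careful on that point.
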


\begin{proof}
We split the integral in the explicit solution (\ref{solution-Burgers}) into two parts
$$
I_{x,a}(k) := \int_{-\infty}^0 e^{- k \phi_{x,a}(y)} dy + \int_{0}^{\infty} e^{- k \phi_{x,a}(y)} dy
$$
and apply Laplace method of Proposition \ref{proposition-Laplace-1} separately
for each integral resulting in the following
\begin{eqnarray*}
I_{x,a}(k) & = &  \left( \frac{2\pi}{k (f'(s^-_{x,a}) + a)} \right)^{1/2}
e^{-k \phi_{x,a}(s^-_{x,a})} \left[ 1 + \mathcal{O}\left(\frac{1}{k}\right) \right] \\
& \phantom{t} & + \left( \frac{2\pi}{k (f'(s^+_{x,a}) + a)} \right)^{1/2}
e^{-k \phi_{x,a}(s^+_{x,a})}
\left[ 1 + \mathcal{O}\left(\frac{1}{k}\right) \right] \quad \mbox{\rm as} \quad
k \to \infty
\end{eqnarray*}
and
\begin{eqnarray*}
\partial_x I_{x,a}(k) & = &  ak (s_{x,a}^- -x) \left( \frac{2\pi}{k (f'(s^-_{x,a}) + a)} \right)^{1/2}
e^{-k \phi_{x,a}(s^-_{x,a})} \left[ 1 + \mathcal{O}\left(\frac{1}{k}\right) \right] \\
& \phantom{t} & + ak (s_{x,a}^+ -x) \left( \frac{2\pi}{k (f'(s^+_{x,a}) + a)} \right)^{1/2}
e^{-k \phi_{x,a}(s^+_{x,a})}
\left[ 1 + \mathcal{O}\left(\frac{1}{k}\right) \right] \quad \mbox{\rm as} \quad
k \to \infty.
\end{eqnarray*}
For $x = 0$, both leading-order terms in these expansions
have equal magnitude resulting in (\ref{expansion-2}), whereas for any fixed $x > 0$,
the first integral is exponentially small compared to the second integral
resulting in (\ref{expansion-3}).
\end{proof}

\begin{remark}
The two different expansions (\ref{expansion-2}) and (\ref{expansion-3})
are asymptotically equivalent for $x = x_1$ (and near $x = x_1)$ because
$\varphi_{x,a} > 0$ for any $x > 0$ and, therefore,
the term involving $e^{-k \varphi_{x,a}}$ is exponentially small as
$k \to \infty$ for any fixed $x > 0$.
\end{remark}

Using the representation (\ref{derivative-formula})
and the Laplace method for $\partial^2_{x} I_{x,a}(k)$, the result of Lemma \ref{lemma-Laplace-2}
can be extended to prove for all $t > t_0$ and all $x \in [0,x_1]$:
\begin{equation}
\label{expansion-2-1}
u_x(x,t) = -k^2 \frac{\chi_{x,a}(f(s^+_{x,a}) - f(s^-_{x,a}))^2 e^{-k \varphi_{x,a}}}{(1 + \chi_{x,a} e^{-k \varphi_{x,a}})^2} +
\mathcal{O}_{L^{\infty}(0,x_1)}(k) \quad
\mbox{\rm as } \quad k \to \infty,
\end{equation}
whereas for all $x \in \left[x_1,\frac{1}{2}\right]$,
\begin{equation}
\label{expansion-3-1}
u_x(x,t) = \mathcal{O}_{L^{\infty}\left(x_1,\frac{1}{2}\right)}(k) \quad
\mbox{\rm as } \quad k \to \infty.
\end{equation}

We shall now compute the leading order of the enstrophy for $t > t_0$:
\begin{eqnarray}
\label{enstrophy-leading-order}
E(u(t)) = \int_0^{x_1} u_x^2(x,t) dx
+ \mathcal{O}(k^2),\quad  \mbox{\rm as } \quad k \to \infty.
\end{eqnarray}
Because $\varphi_{x,a}$ is monotonically growing for all $x \in [0,x_1]$, we can use
another version of the Laplace method (Section 3.3 in \cite{Miller}).

\begin{proposition}
For any $c > 0$, assume that $\phi \in C^2(0,c)$ is monotonically growing
such that $\phi'(0) > 0$. Then, for any $\theta \in C^1(0,c) \cap L^1(0,c)$,
we have the following asymptotic expansion:
\begin{equation}
\label{Laplace2}
I(k) := \int_{0}^{c} \theta(y) e^{-k \phi(y)} dy = \frac{\theta(0)}{k \phi'(0)}
 e^{-k \phi(0)} \left[ 1 + \mathcal{O}\left(\frac{1}{k}\right) \right] \quad \mbox{\rm as} \quad
k \to \infty.
\end{equation}
\label{proposition-Laplace-2}
\end{proposition}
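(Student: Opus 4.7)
The plan is to reduce the integral to a standard exponential integral by localizing near $y=0$ and changing variables. Since $\phi$ is strictly increasing on $(0,c)$ and $\phi'(0)>0$, I would choose $\delta \in (0,c)$ small enough that $\phi'(y) \geq \frac{1}{2} \phi'(0)$ on $[0,\delta]$, which is possible by continuity of $\phi'$. The tail over $[\delta,c]$ is handled by monotonicity alone: since $\phi(y) \geq \phi(\delta) > \phi(0)$ there and $\theta \in L^1(0,c)$,
$$
\left| \int_\delta^c \theta(y) e^{-k\phi(y)} dy \right| \leq \| \theta \|_{L^1} e^{-k\phi(\delta)},
$$
which is exponentially smaller than the target scale $e^{-k\phi(0)}/k$ and can be absorbed into the $\mathcal{O}(1/k)$ relative error.

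On $[0,\delta]$, I would substitute $u = \phi(y) - \phi(0)$, a $C^2$ diffeomorphism onto $[0,\eta]$ with $\eta := \phi(\delta)-\phi(0)$, to obtain
$$
\int_0^\delta \theta(y) e^{-k\phi(y)} dy = e^{-k\phi(0)} \int_0^\eta h(u) e^{-ku} du, \qquad h(u) := \frac{\theta(y(u))}{\phi'(y(u))}.
$$
Since $\theta$ is $C^1$ and $\phi'$ is $C^1$ and nonvanishing on $[0,\delta]$, the function $h$ lies in $C^1([0,\eta])$. Writing $h(u) = h(0) + g(u)$ with $h(0) = \theta(0)/\phi'(0)$ and $|g(u)| \leq Mu$ on $[0,\eta]$ by the mean value theorem, the leading piece integrates exactly to $(\theta(0)/\phi'(0)) \cdot k^{-1}(1 - e^{-k\eta})$, yielding the stated main term up to exponentially small corrections. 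The remainder is bounded by
$$
\left| \int_0^\eta g(u) e^{-ku} du \right| \leq M \int_0^\infty u\, e^{-ku} du = \frac{M}{k^2},
$$
which is $\mathcal{O}(1/k)$ relative to the leading $\mathcal{O}(1/k)$ contribution. Multiplying by $e^{-k\phi(0)}$ and combining with the tail estimate gives (\ref{Laplace2}).

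There is no serious obstacle here, only careful bookkeeping; the only delicate point is the validity of the local change of variables, which uses $\phi'(0)>0$ together with continuity of $\phi'$. A tempting one-line shortcut would be the single integration by parts $\int_0^c \theta e^{-k\phi} dy = -k^{-1}[\theta e^{-k\phi}/\phi']_0^c + k^{-1} \int_0^c (\theta/\phi')' e^{-k\phi} dy$, but this demands $\phi' \neq 0$ on all of $[0,c]$, which strict monotonicity need not literally provide (isolated interior zeros of $\phi'$ are compatible with strict monotonicity of $\phi$). The localization approach sidesteps this point because it uses the hypothesis $\phi'(0)>0$ only on a small neighborhood of the endpoint and strict monotonicity of $\phi$ only to kill the tail.
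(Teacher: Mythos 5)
Your proof is correct. Note, however, that the paper does not actually prove this proposition: it is quoted as a standard endpoint version of the Laplace method with a citation to Miller's \emph{Applied Asymptotic Analysis} (Section 3.3), so there is no internal argument to compare against. Your localization-plus-substitution argument is a clean self-contained replacement: the exponential tail bound on $[\delta,c]$ uses only monotonicity and $\theta\in L^1$, the change of variables $u=\phi(y)-\phi(0)$ is legitimate on $[0,\delta]$ because $\phi'\geq\frac12\phi'(0)>0$ there, and the $C^1$ regularity of $h(u)=\theta(y(u))/\phi'(y(u))$ gives the $M/k^2$ remainder, which is exactly the relative $\mathcal{O}(1/k)$ claimed. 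Your closing observation is also well taken: the textbook one-line integration by parts needs $\phi'\neq 0$ on all of $[0,c]$, which strict monotonicity of $\phi$ does not guarantee, whereas your argument uses $\phi'(0)>0$ only near the endpoint. Two cosmetic points: the hypotheses place $\phi$ and $\theta$ in $C^2(0,c)$ and $C^1(0,c)$ on the open interval, so you should say explicitly that you read $\phi'(0)$ and $\theta(0)$ as the one-sided limits the statement presupposes; and the relative-error form $[1+\mathcal{O}(1/k)]$ tacitly assumes $\theta(0)\neq 0$ --- your argument in fact proves the stronger absolute statement $I(k)=\frac{\theta(0)}{k\phi'(0)}e^{-k\phi(0)}+\mathcal{O}\bigl(k^{-2}e^{-k\phi(0)}\bigr)$, which is what is actually used later in the paper.
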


We have computed previously: $\varphi_{0,a} = 0$, $\chi_{0,a} = 1$, and
$$
\partial_x \varphi_{0,a} = a (s^+_{0,a} - s^-_{0,a}) = 2 a s^+_{0,a} = -2 f(s^+_{0,a}) > 0.
$$
Therefore, a straightforward application of Proposition \ref{proposition-Laplace-2} yields
\begin{eqnarray}
\label{enstrophy-final}
E(u(t)) = \frac{1}{2} k^3 |f(s^+_{0,a})|^3
+ \mathcal{O}(k^2),\quad  \mbox{\rm as } \quad k \to \infty.
\end{eqnarray}
For $a = |f'(0)|$, $s^+_{0,a} = 0$, and the $\mathcal{O}(k^3)$ term of (\ref{enstrophy-final})
vanishes because $f(0) = 0$. When $a$ is decreased from $|f'(0)|$ to $0$,
the value of $s^+_{0,a}$ grows from $0$ to $\frac{1}{2}$ and it passes the value $x_*$, where
$f'(x_*) = 0$ and $|f(s^+_{0,a})|$ is maximal. The corresponding value of $a_*$ is found from
the equation $s^+_{0,a_*} = x_*$, or explicitly, $a_* = \frac{|f(x_*)|}{x_*}$. This argument
completes the proof of the first two bounds (\ref{arg-max-theorem}) of
Theorem  \ref{theorem-time}  with $T_* = \frac{1}{2 k a_*} = \frac{x_*}{2k |f(x_*)|}$
(recall that $k = \mathcal{O}(\mathcal{E}^{1/2})$).

\begin{remark}
An application of the Laplace method to the values of $t$ near $t_0$ that depends on $k$
is much more delicate and has been explored in the pioneer paper \cite{Chester} (see \cite{Wang} for
recent development). We are very fortunate here that the main result of Theorem \ref{theorem-time}
can be proven without knowing the behavior of the solution near $t = t_0$.
\end{remark}

To complete the proof of Theorem  \ref{theorem-time}, we need to compute the energy $K(u(t))$ for all $t > t_0$.
By Lemma \ref{lemma-Laplace-2},
we represent for all $x \in \mathbb{T}$:
$$
u(x,t) = k f(s^+_{x,a}) + \tilde{u}(x,t),
$$
where $\tilde{u}$ is found from the asymptotic expansions (\ref{expansion-2}) and (\ref{expansion-3}).
By Proposition \ref{proposition-Laplace-2}, we have
\begin{eqnarray*}
K(u(t)) =  k^2 \int_{0}^{1/2} f^2(s^+_{x,a}) dx + \mathcal{O}(k) \quad  \mbox{\rm as } \quad k \to \infty.
\end{eqnarray*}

For all $a < |f'(0)|$, we have $s^+_{0,a} > 0$. Nevertheless, we still have
$s^+_{x,a} = x$ for $x = \frac{1}{2}$. The map
$\left[0,\frac{1}{2}\right] \ni x \mapsto s^+_{x,a} \in \left[s^+_{0,a},\frac{1}{2}\right]$ is one-to-one
and onto. As a result, for all $t > t_0$:
\begin{eqnarray*}
K(u(t)) & = & k^2 \int_{s^+_{0,a}}^{1/2} f^2(y) \left(1 + \frac{1}{a} f'(y) \right) dy + \mathcal{O}(k) \\
& = & k^2 \left( \int_{s^+_{0,a}}^{1/2} f^2(y) dy + \frac{1}{3a} |f(s^+_{0,a})|^3 \right) + \mathcal{O}(k).
\end{eqnarray*}
This argument completes the proof of the third bound (\ref{arg-max-theorem}) of
Theorem  \ref{theorem-time}. To prove the bound (\ref{sharp-bound-K}), we need to show that
the $\mathcal{O}(k^2)$ term in the expansion for $K(u(t))$ is different from the one for $K(u_0)$
(it can only be smaller), or equivalently, that
\begin{equation}
\label{required-bound}
\int_{0}^{s^+_{0,a}} f^2(y) dy > \frac{1}{3a} |f(s^+_{0,a})|^3 = \frac{1}{3} s^+_{0,a} f^2(s^+_{0,a}).
\end{equation}
To prove (\ref{required-bound}), we define two functions $F,H : [0,x_*] \to \R$ by
$$
G(x) := \int_0^x f^2(y) dy - \frac{1}{3} x f^2(x), \quad H(x) = f(x) - x f'(x).
$$
By the conditions on $f$, it is clear that $G \in C^3([0,x_*])$ and $H \in C^2([0,x_*])$. Furthermore,
$G(0) = 0$, $H(0) = 0$, and
$$
G'(x) = \frac{2}{3} f(x) H(x), \quad H'(x) = -x f''(x).
$$
Because $f''(x) \geq 0$ for all $x \in \left[0,\frac{1}{2}\right]$,
$H$ is a monotonically decreasing function from $H(0) = 0$ to $H(x) < 0$ for all $x \in (0,x_*]$.
Therefore, $G$ is a monotonically increasing function from $G(0) = 0$ to $G(x) > 0$ for all $x \in (0,x_*]$.
Therefore, the inequality (\ref{required-bound}) is proved and hence, the
bound (\ref{sharp-bound-K}) is verified. Note that (\ref{enstrophy-final}) and
(\ref{required-bound}) yield explicit bounds (\ref{bound-1}) and (\ref{bound-2}). The proof of
Theorem \ref{theorem-time} is complete.

{\bf Acknowledgements:} The author thanks C. Doering for encouragements to write this paper,
J. Goodman for the stimulating idea of how to obtain the main result, P. Miller for
useful discussions of details of the Laplace method, and W. Craig for careful reading
of the manuscript. The research was supported in part by the NSERC
and was completed during author's visit to the University of Michigan at Ann Arbor.


\begin{thebibliography}{10}
\bibitem{AblowitzFokas} M. Ablowitz and A. Fokas, {\em Complex Analysis}
(Cambridge University Press, Cambridge, 1997).

\bibitem{Diego} D. Ayala and B. Protas, ``On maximum enstrophy growth in a hydrodynamic system",
Physica D {\bf 240} (2011), 1553--1563.

\bibitem{Burgers} J.M. Burgers, ``A mathematical model illustrating the
theory of turbulence," Adv. Appl. Mech. {\bf 1} (1948), 171--199.

\bibitem{Chester} C. Chester, B. Friedman, and F. Ursell, ``An extension of the method
of steepest descents", Proc. Cambridge Phil. Soc. {\bf 53} (1957), 599--611.

\bibitem{Cole} J.D. Cole, ``On a quasi-linear parabolic equation occurring in aerodynamics,"
Q. Appl. Math. {\bf 9} (1951), 225--236.

\bibitem{Dix} D.B. Dix, ``Nonuniqueness and uniqueness in the initial-value problem for
Burgers' equation", SIAM J. Math. Anal. {\bf 27} (1996), 708--724.

\bibitem{Doering} C.R. Doering, ``The 3D Navier--Stokes problem", Annual Review of Fluid Mechanics
{\bf 41} (2009), 109--128.

\bibitem{Frach} L. Frachebourg and Ph.A. Martin, ``Exact statistical properties of the Burgers 
equation", J. Fluid Mech. {\bf 417} (2000), 323--349.

\bibitem{Hopf} E. Hopf, ``The partial differential equations $u_t + u u_x = \mu u_{xx}$",
Comm. Pure Appl. Math. {\bf 3} (1950), 201--230.

\bibitem{Lorenz} H.-O. Kreiss and J. Lorenz, {\em Initial--Boundary Value Problems and the
Navier--Stokes Equations}, (SIAM, Philadelphia, 2004).

\bibitem{LuLu} L. Lu and C.R. Doering, ``Limits on enstrophy growth for solutions of
the three-dimensional Navier-Stokes equations", Indiana Univ. Math. J. {\bf 57} (2008), 2693-–2727.

\bibitem{Miller} P. Miller, {\em Applied Asymptotic Analysis}, Graduate Studies in Mathematics
{\bf 75} (AMS Publications, Providence, 2006).

\bibitem{Pelinovsky} D.E. Pelinovsky, ``Enstrophy growth in the viscous Burgers equation",
arXiv:1202.2071 (2012), submitted to Dynamics in PDEs.

\bibitem{Wang} R. Wong, {\em Asymptotic Approximations of Integrals} (SIAM, Philadelphia, 2001).

\bibitem{Whitham} G. B. Whitham, {\em Linear and Nonlinear Waves}
(Wiley-Interscience Series of Texts, Monographs and Tracts, 1974).

\end{thebibliography}
\end{document}